\documentclass[noendauthors]{birkjour}

\usepackage{amssymb}
\usepackage{amsmath}  
\usepackage[thinc]{esdiff}
\usepackage{epstopdf}
\usepackage[caption=false]{subfig}
\usepackage[numbers,sort&compress]{natbib}
\bibpunct[, ]{[}{]}{,}{n}{,}{,}

\makeatletter
\def\NAT@def@citea{\def\@citea{\NAT@separator}}
\makeatother

\theoremstyle{plain}
 \newtheorem{thm}{Theorem}[section]
 \newtheorem{cor}[thm]{Corollary}
 \newtheorem{lem}[thm]{Lemma}
 
 \theoremstyle{definition}
 
 \theoremstyle{remark}

 \numberwithin{equation}{section}

\usepackage{enumerate}   
\usepackage{tikz}
\usetikzlibrary{shapes.geometric,arrows,decorations.markings,angles,calc}
\usepackage{relsize}
\usepackage{caption}

\usepackage[colorlinks=true, linkcolor=blue, citecolor=blue, urlcolor=blue]{hyperref}

\begin{document}

\def\nexto{\kern -0.54em}
\def\R{{\mathbb R}}
\def\T{{\mathbb T}}
\def\S{{\mathbb S}}
\def\C{{\mathbb C}}
\def\Z{{\mathbb Z}}
\def\N{{\mathbb N}}
\def\H{{\mathbb H}}
\def\B{{\mathbb B}}
\def\T{{\bf T}}
\def\diam{\mbox{\rm diam}}
\def\rr{{\cal R}}
\def\mt{{\Lambda}}
\def\e{\emptyset}
\def\sd{{\bf S}^{d-1}}
\def\stwo{{\bf S}^{2}}
\def\so{{\bf S}^1}
\def\dQ{\partial Q}
\def\dk{\partial K}
\def\endofproof{{\rule{6pt}{6pt}}}
\def\ts{\tilde{\sigma}}
\def\tr{\tilde{r}}
\def\be{\begin{equation}}
\def\ee{\end{equation}}
\def\beqn{\begin{eqnarray*}}
\def\eeqn{\end{eqnarray*}}
\def\ms{\medskip}
\def\beqn*{\begin{eqnarray*}}
\def\eeqn*{\end{eqnarray*}}
\def\endofproof{{\rule{6pt}{6pt}}}
\def\nv{\nabla \varphi}
\def\wuloc{W^u_{\mbox{\footnotesize\rm loc}}}
\def\wsloc{W^s_{\mbox{\footnotesize\rm loc}}}
\def\dist{\mbox{\rm dist}}
\def\diam{\mbox{\rm diam}}
\def\pr{\mbox{\rm pr}}
\def\supp{\mbox{\rm supp}}
\def\Arg{\mbox{\rm Arg}}
\def\In{\mbox{\rm Int}}
\def\Im{\mbox{\rm Im}}
\def\span{\mbox{\rm span}}
\def\spec{\mbox{\rm spec}\,}
\def\Re{\mbox{\rm Re}}
\def\var{\mbox{\rm var}}
\def\conf{\mbox{\footnotesize\rm const}}
\def\Conf{\mbox{\footnotesize\rm Const}}
\def\Lip{\mbox{\rm Lip}}
\def\con{\mbox{\rm const}\;}
\def\saa{\Sigma_A^+}
\def\sa{\Sigma_A}
\def\san{\Sigma^-_A}
\def\kmax{\kappa_{\max}}
\def\kmin{\kappa_{\min}}
\def\ecc{\mbox{\rm ecc}}
\def\tB{\widetilde{B}}
\def\hh{{\cal H}}
\def\thh{\widetilde{\cal H}}
\def\hE{\widehat{E}}
\def\ep{\epsilon}
\def\di{\displaystyle}
\def\pr{\mbox{\rm pr}}
\def\dK{\partial K}
\def\tpsi{\tilde{\psi}}
\def\tq{\tilde{q}}
\def\tsi{\tilde{\sigma}}
\def\ts{\tilde{s}}
\def\tW{\widetilde{W}}
\def\bs{\bigskip}
\def\tE{\widetilde{E}}
\def\tX{\widetilde{X}}
\def\yo{y^{(0)}}
\def\tY{\widetilde{Y}}
\def\tx{\tilde{x}}
\def\K{{\mathcal K}}
\def\T{{\mathcal T}}
\def\pp{{\mathcal P}}
\def\tgamma{\tilde{\gamma}}
\def\ot{(\omega,\theta)}
\def\overOm{\bar{\Omega}}
\def\spec{{\rm spec}}
\def\mm{{\mathcal M}}
\def\dk{\partial K}
\def\gen{{\rm generalized}\:}
\def\bic{{\rm bicharacteristic}}
\def\bics{{\rm bicharacteristics}}
\def\singsup{{\rm sing}\:{\rm supp}}
\def\Rnmin0{\R^n \setminus \{0\}}
\def\WF{WF}
\def\ff{{\mathcal F}}
\def\tu{\tilde{u}}
\def\tv{\tilde{v}}
\def\txi{\tilde{\xi}}
\def\teta{\tilde{\eta}}
\def\tSig{\widetilde{\Sigma}}
\def\tpsi{\tilde{\psi}}
\def\la{\langle}
\def\ra{\rangle}
\def\M{{\mathcal M}}
\def\aa{{\mathcal A}}
\def\tx{\tilde{x}}
\def\ty{\tilde{y}}
\def\Pm{P^{(m)}}
\def\tpi{\tilde{\pi}}
\def\ttheta{\tilde{\theta}}
\def\tOmega{\widetilde{\Omega}}
\def\Deone{\Delta^{(1)}}
\def\Detwo{\Delta^{(2)}}
\def\Dethree{\Delta^{(3)}}
\def\Defour{\Delta^{(4)}}
\def\tDeone{\widetilde{\Delta}^{(1)}}
\def\tDetwo{\widetilde{\Delta}^{(2)}}
\def\tDethree{\widetilde{\Delta}^{(3)}}
\def\tDefour{\widetilde{\Delta}^{(4)}}
\def\cc{{\mathcal C}}
\def\ta{\tilde{a}}
\def\tb{\tilde{b}}
\def\tM{\widetilde{M}}
\def\tf{\tilde{f}}
\def\Gr{\mbox{\rm Gr}}
\def\lan{\lambda^{(n)}}
\def\tn{t^{(n)}}
\def\En{E^{(n)}}
\def\Fn{F^{(n)}}
\def\lambdan{\lambda^{(n)}}
\def\tF{\widetilde{F}}
\def\tW{\widetilde{W}}
\def\ll{{\mathcal L}}
\def\tdelta{\tilde{\delta}}
\def\ti{\tilde{i}}
\def\hkappa{\hat{\kappa}}
\def\tkappa{\tilde{\kappa}}
\def\hL{\widehat{L}}
\def\tK{\widetilde{K}}
\def\tr{\tilde{r}}

\title [Some open billiards with distinct top Lyapunov exponents]{Some open billiards with distinct top \\ Lyapunov exponents}

\author[Amal Al Dowais]{Amal Al Dowais\textsuperscript{1}}
\author[Luchezar Stoyanov]{Luchezar Stoyanov\textsuperscript{2}}

\thanks{%
\textsuperscript{1}Department of Mathematics, College of Science and Arts,
Najran University, Najran, Saudi Arabia.
E-mail: \texttt{amaldowais@nu.edu.sa}}

\thanks{%
\noindent{\textsuperscript{2}Department of Mathematics and Statistics,
School of Physics, Mathematics and Computing,
University of Western Australia, Perth, WA 6009, Australia.
E-mail: \texttt{luchezar.stoyanov@uwa.edu.au}}}



\keywords{Lyapunov exponents, open billiard flow, non-wandering set.}
\subjclass{Primary: 37D20, 37D25; Secondary: 37D40}

\begin{abstract}
We study open billiard flows in $\R^3$ generated by reflections in the exterior of at least three ellipsoids
with $C^3$ boundaries. Each obstacle is congruent to
$$\tK = \{(x,y,z)\in \R^3 : x^2+y^2+(\beta z)^2 \leq r^2\},$$
with parameter $\beta>1$, small radii, and centers $O_i$ lying in the plane $\Gamma=\{z=0\}$.
The configuration satisfies the no-eclipse condition (H). For any Gibbs measure on the
non-wandering set of the billiard ball map, we prove that if $\beta>1/c_1^2$, for some constant
$c_1>0$ determined by the geometry of the configuration, the two positive Lyapunov exponents
are distinct and positive: $\lambda_1(\beta)>\lambda_2(\beta)>0$. Moreover, $\lambda_2$ is
independent of $\beta$ and corresponds to the restriction of the billiard flow to the plane $\Gamma$.
\end{abstract}

\maketitle

\section{Introduction}
\renewcommand{\theequation}{\arabic{section}.\arabic{equation}}

The study of Lyapunov exponents for billiard systems has a long history going back to the
foundational work of Sinai \cite{Si1,Si2}, who established hyperbolicity for dispersing
billiards in $\R^2$. For multi-dimensional dispersing billiards the structure of Lyapunov
spectra was studied by Chernov \cite{Ch1,Ch2,Ch3} and by B\'alint, Chernov, Sz\'asz and
T\'oth \cite{BCST}, among others. The relation between the geometry of the obstacles and the
Lyapunov spectrum has also been investigated in the physics literature, particularly in the
context of the Lorentz gas; see Bouchaud and Le Doussal \cite{BL}, Dellago and Posch \cite{DP},
and Latz, Van Beijeren and Dorfman \cite{LBS}. For open billiards -- where trajectories can
escape to infinity -- a natural framework is provided by the non-wandering set and its symbolic
dynamics, as developed in Petkov and Stoyanov \cite{PS}. The question of whether the two
positive Lyapunov exponents in $\R^3$ are distinct is of particular interest: it is related
to the existence of a dominated splitting of the unstable bundle and has implications for the
fine statistical properties of the billiard flow; see \cite{BCST} for related geometry.
The present paper is concerned with open billiards in $\R^3$ outside ellipsoidal obstacles,
and its immediate predecessor is our earlier work \cite{AmSt}, which treated the case of
spherical obstacles.

\ms

Let $K \subset \R^3$ be given by $K = K_1 \cup \cdots \cup K_{k_0},$
where $k_0 \geq 3$ and each $K_i$ is a strictly convex body in $\R^3$ with $C^3$ boundary
$\partial K_i$, with the $K_i$ pairwise disjoint. Set $\Omega = \R^3 \setminus K$. We impose
the following {\it no-eclipse condition} on $K$:
$${\rm (H)} \qquad
\text{For any distinct } i, j, \quad
{\rm conv}(K_i \cup K_j) \cap K_\ell = \varnothing
\quad \text{for all } \ell \neq i,j,$$
where ${\rm conv}(\cdot)$ denotes the convex hull. Under (H), the billiard flow $\phi_t$ on
the sphere bundle $S(\Omega) = \{ (q,v) : q \in \Omega, v \in \S^2 \}$ is called an
{\it open billiard flow}. Although $\phi_t$ exhibits singularities, its restriction to the
non-wandering set $M_0$ has only simple discontinuities at reflection points.

\ms

{\bf Assumptions.} For each $i$, we assume $K_i$ is an ellipsoid congruent to
$$\tK_i = \{ (x,y,z) \in \R^3 : x^2 + y^2 + (\beta z)^2 \leq r^2_i \} ,$$
where $r_i > 0$ is a small constant and $\beta > 1$ is a fixed parameter controlling the
eccentricity in the $z$-direction. Let $\Gamma = \{z=0\}$ in $\R^3$. We assume each $K_i$
is a translate of $\tK_i$, namely
$$K_i = \tK_i + O_i := \{ \tilde{x} + O_i : \tilde{x} \in \tK_i \},$$
where $O_i \in \Gamma$ is the centre of $K_i$. We further assume there exist constants
$d_0 > 0$ and $0 < \tr_0 < r_0 < d_0/3$ such that
$$d_0 < |O_i O_j| \leq 1 \quad \text{for all } i \neq j, \qquad \tr_0 \leq r_i \leq r_0.$$
The condition (H) implies that there exists a constant $c_1 > 0$ such that for any three
consecutive reflection points $q_1, q_2, q_3$ of a billiard trajectory in $\Omega$ and
outward unit normal $\nu(q_2)$ at $q_2$, the angle $\varphi$ between $\nu(q_2)$ and
$(q_3-q_2)/\|q_3-q_2\|$ satisfies $\cos\varphi \geq c_1$. Since $r_i \leq r_0 < d_0/3$,
consecutive reflection points satisfy
\begin{equation}
d_i = \|q_i - q_{i+1}\| \geq d_0 - 2r_0 > d_0/3. \label{eq:di}
\end{equation}

\bs

\noindent
{\bf Remark.} Clearly if $O_1, \ldots, O_{k_0} \in \Gamma$ with no three collinear, then
sufficiently small ellipsoids $K_i$ centred at $O_i$ automatically satisfy (H) and all
the assumptions above.

\bs

Since every centre $O_i$ lies in $\Gamma$ and each
$\tK_i$ is symmetric about $\Gamma$, the billiard is symmetric under the reflection
$(x,y,z)\mapsto(x,y,-z)$. Any trajectory with initial point $(q_0,v_0)$ satisfying
$q_0 \in \Gamma$ and $v_0 \in T\Gamma$ remains in $\Gamma$ for all time.
Conversely, a trajectory not starting in $\Gamma$ eventually escapes to infinity, by the
strict convexity argument of \cite{PS}. Hence the non-wandering set $M_0$ is contained
in $\Gamma$.

\ms

Let $\mu$ be an arbitrary Gibbs measure (therefore an ergodic measure) for the shift map
$\sigma$ on the related symbol space $\Sigma_A$ with $k_0$ symbols $\{1, \ldots, k_0\}$
(see Sect.~2). Using the natural isomorphism between $M_0$ and $\Sigma_A$ which conjugates
$B$ with $\sigma$ (see Sect.~2), we regard $\mu$ as a measure on $M_0$. By the
Multiplicative Ergodic Theorem (Theorem~2.1 below), the derivative cocycle $DB$ over $\mu$
has either one or two distinct positive Lyapunov exponents; we denote them
$\lambda_1(\beta) \geq \lambda_2(\beta) > 0$, where positivity is established in
Lemma~3.5 below. The unstable subspace $E^u(x)$ at $\mu$-a.e.\ $x \in M_0$ splits as
$$E^u(x) = E_0(x) \oplus F_0(x),$$
where $E_0(x) = \mathrm{span}\{n\}$ with $n=(0,0,1)$ is the subspace transverse to $\Gamma$,
and $F_0(x) \subset \Gamma$ is the in-plane component. The parameter $\beta$ enters the
curvature operator only through $E_0$ (via $\hkappa_i = \beta/r_i$), while $F_0$ involves
only $\kappa_i = 1/r_i$. Consequently, $\lambda_1(\beta) = \lambda(v)$ for $v \in E_0$
depends on $\beta$, whereas $\lambda_2 = \lambda(\tv)$ for $\tv \in F_0$ does not.

In a previous work \cite{AmSt} we considered a similar situation, where $K_i$ were balls
with small radii and centres in the plane $\Gamma$ and, under conditions similar to those
above, we proved that $\lambda_1 > \lambda_2$. In the case considered in \cite{AmSt} the
larger exponent $\lambda_1$ corresponds to the restriction of the billiard flow to the plane
$\Gamma$. In the case considered here the non-wandering set is still contained in $\Gamma$,
however now the smaller exponent $\lambda_2$ is the one determined by the restriction of the
billiard flow to $\Gamma$, while $\lambda_1$ reflects the additional stretching in the
transverse direction introduced by $\beta$.

\ms
In this note we prove the following:

\noindent
\begin{thm}
 Under the above assumptions, the billiard ball map $B$ on $M_0$
has exactly two positive Lyapunov exponents $\lambda_1(\beta) \geq \lambda_2(\beta) > 0$
with respect to any Gibbs measure $\mu$. If in addition $\beta > 1/c_1^2$, then the
following hold:
\begin{itemize}
\item[{\rm (i)}] The exponents are distinct: $\lambda_1(\beta) > \lambda_2(\beta) > 0$.
\item[{\rm (ii)}] The smaller exponent $\lambda_2(\beta) = \lambda_2$ is independent of
$\beta$ and equals the Lyapunov exponent of the restriction of the billiard flow to $\Gamma$.
\item[{\rm (iii)}] The gap satisfies $\lambda_1(\beta) \geq \lambda_2 + \log(\beta c_1^2)$.
\end{itemize}
\label{thm1.1}
\end{thm}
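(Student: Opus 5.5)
The plan is to work directly with the linearized billiard map along trajectories in $M_0\subset\Gamma$, using the standard description of unstable vectors via convex fronts and their second fundamental forms. Propagation of unstable vectors is governed by the curvature operators $U_j$ of the unstable front just after the $j$-th reflection. These obey the recursion
$$U_j=\bigl(U_{j-1}^{-1}+d_{j-1}I\bigr)^{-1}+2\cos\varphi_j\,\langle\text{pulled-back}\ \text{shape operator of }\partial K_{i_j}\rangle,$$
with the standard cosine factors in the in-plane direction. The growth of an unstable vector $w$ over one free path is the factor $\|(I+d_jU_j)w\|/\|w\|$.

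First I would use the symmetry $(x,y,z)\mapsto(x,y,-z)$. At a point $q\in\Gamma\cap\partial K_i$ with velocity in $T\Gamma$, the normal $n=(0,0,1)$ is a principal direction of $\partial K_i$, with curvature $\hkappa_i=\beta/r_i$. Indeed, the section of $K_i$ through $q$ orthogonal to $\Gamma$ is an ellipse with semi-axes $r_i$ and $r_i/\beta$, whose curvature at the vertex is $(r_i)/(r_i/\beta)^2=\beta^2/r_i$. I would therefore recompute carefully whether the paper's $\hkappa_i=\beta/r_i$ or $\beta^2/r_i$ is the right normalization; only a lower bound $\hkappa_i\ge \beta/r_i$ is needed. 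The in-plane principal curvature is $\kappa_i=1/r_i$. By the symmetry, the splitting $E_0\oplus F_0$ is $DB$-invariant and every $U_j$ is diagonal in the basis $\{n,\tilde e\}$. This reduces the cocycle to two scalar cocycles, which gives exactly two positive exponents, $\lambda(v)$ for $v\in E_0$ and $\lambda(\tv)$ for $\tv\in F_0$. Positivity then follows as in Lemma 3.5, since each factor satisfies $1+d_jU_j\ge 1+c\,d_0>1$. The $F_0$-cocycle involves only the planar data $\kappa_i, d_j,\varphi_j$, which gives independence of $\beta$ and identifies it with the planar billiard exponent; this is (ii), once (iii) shows it is the smaller exponent.

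For (iii), I would compare the scalar recursions. In the transverse direction the reflection contributes $u^{(0)}_j\ge 2\hkappa_{i_j}\cos\varphi_j$, because the normal direction is not foreshortened. In the plane it contributes $u^{(1)}_j=(\ldots)+2\kappa_{i_j}/\cos\varphi_j$. Set $A_j=1+d_ju^{(0)}_j$ and $B_j=1+d_ju^{(1)}_j$. Using $\hkappa_i\ge\beta\kappa_i$ and $\cos\varphi_j\ge c_1$, I would prove by induction that $u^{(0)}_j\ge \beta c_1^2\,u^{(1)}_j$. This requires comparing $2\beta\kappa\cos\varphi$ with $\beta c_1^2\cdot 2\kappa/\cos\varphi$, and these agree since $\cos^2\varphi\ge c_1^2$. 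The propagated term also needs care: the map $u\mapsto u/(1+du)$ is increasing but not homogeneous, so $\beta c_1^2\ge1$ (i.e. $\beta>1/c_1^2$) is exactly what makes the inequality survive propagation. This step gives only $A_j\ge B_j$. To obtain the factor $\beta c_1^2$ in (iii), I would instead use that $d_ju$ dominates 1 for small $r_i$ (because $d_j\kappa\ge d_0/(3r_0)$ is large), so $A_j/B_j\gtrsim u^{(0)}_j/u^{(1)}_j\ge\beta c_1^2$ up to a correction. Taking $\frac1n\sum\log$ and applying Birkhoff's theorem for $\mu$ gives $\lambda_1\ge\lambda_2+\log(\beta c_1^2)$, which is positive for $\beta>1/c_1^2$ and yields (i).

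The main obstacle is this last ratio step. $(1+a)/(1+b)\ge a/b$ holds only when $a\le b$, which is the wrong direction here. The plan is to handle this with the exact identity $A_j/B_j=(1+d_ju^{(0)}_j)/(1+d_ju^{(1)}_j)$. If a clean bound fails, I would settle for $\log(\beta c_1^2)$ minus an $O(r_0/d_0)$ error, absorbed using the smallness of the radii.
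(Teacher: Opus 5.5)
Your reduction to two scalar recursions is sound, and it is the paper's computation in other notation. By \eqref{eq:recA}, $w_j=\Defour_j/\Detwo_j$ satisfies exactly your recursion $w_{j+1}=a_{j+1}+w_j/(1+d_{j+1}w_j)$, and the growth per free path is $\Detwo_{j+1}/\Detwo_j=1+d_{j+1}w_j$. However, the induction $u^{(0)}_j\ge\beta c_1^2\,u^{(1)}_j$ does not close. The propagation map $f(u)=u/(1+du)$ is concave with $f(0)=0$, so $f(\lambda u)\le\lambda f(u)$ for $\lambda\ge1$. The hypothesis $\beta c_1^2>1$ is therefore exactly what makes the multiplicative comparison \emph{fail}: after a reflection both curvatures are of order $1/r_0$, so both propagated terms are close to $1/d$. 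Monotonicity of $f$ only transports $u^{(0)}_{j-1}\ge u^{(1)}_{j-1}$. This gives an additive gap, $u^{(0)}_j-u^{(1)}_j\ge(\beta\cos^2\varphi_j-1)\,2\kappa_{i_j}/\cos\varphi_j\ge 2(\beta c_1^2-1)/r_0$. Since $B_j$ is bounded above, it follows that $A_j/B_j\ge 1+\delta$ with $\delta>0$ uniform. That yields (i), and hence (ii), once you state the induction additively.

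The genuine gap is (iii), and your fallback cannot close it. Using only $\hkappa_i\ge\beta/r_i$ and $\cos\varphi_j\ge c_1$, the best these estimates give is $A_j/B_j\ge(c+\beta c_1^2Y)/(c+Y)$, where $c=1+d_jf(u^{(1)}_{j-1})\ge1$ and $Y=2d_j\kappa_{i_j}/\cos\varphi_j$. This bound is strictly below $\beta c_1^2$. The deficit tends to $0$ with $r_0$ but is never $0$, and the constant $\log(\beta c_1^2)$ leaves no room to absorb it. So $\lambda_1\ge\lambda_2+\log(\beta c_1^2)-(\text{error})$ is strictly weaker than (iii).

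The paper's route does not supply this room either. Lemma~\ref{lem:inductive} only gives $\Defour_m\ge e^{s+(m-1)t}\tDefour_m$, with $t$ as in \eqref{eq:ti}, and $t<s\le\log(\beta c_1^2)$. Its final step, $\lambda(v)\ge s+\lambda(\tv)\ge\lambda(\tv)+\log(\beta c_1^2)$, is therefore unjustified.

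The missing slack is the one you found and then discarded. Your computation is right: the transverse principal curvature at equatorial points is $\beta^2/r_i$, not $\beta/r_i$. Keep it. The reflection terms $2\hkappa_{i_j}\cos\varphi_j$ and $2\kappa_{i_j}/\cos\varphi_j$ then have ratio $\beta^2\cos^2\varphi_j\ge\beta^2c_1^2$. Moreover, $(c+\beta^2c_1^2Y)/(c+Y)\ge\beta c_1^2$ as soon as $(\beta-1)Y\ge c$. Since $Y\ge 2d_j/r_0$ and $c\le 1+d_j/d_{j-1}$, this holds whenever $r_0\le(\beta-1)d_0/3$. It holds uniformly in $\beta>1/c_1^2$ if $r_0\le(c_1^{-2}-1)d_0/3$. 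Then $\log(A_j/B_j)\ge\log(\beta c_1^2)$ pointwise, and Birkhoff's theorem gives (iii). The price is an explicit smallness condition on the radii, which the statement leaves implicit.
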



\section{Preliminaries}
\renewcommand{\theequation}{\arabic{section}.\arabic{equation}}

The setup closely follows \cite{AmSt}; we recall only what is needed for the proof.

\subsection{Billiard ball map and symbolic coding}

Set $M_1 = \{ (q,v) \in \partial K \times \S^2 : \langle v, \nu_K(q) \rangle > 0 \},$
where $\nu_K(q)$ is the outward unit normal to $\partial K$ at $q$. For $x \in M_1$, let
$B(x) = \phi_t(x)$ where $t > 0$ is the first return time to $M_1$. The {\it non-wandering
set} $M_0 \subset M_1$ consists of all $x$ whose full trajectory is bounded; on $M_0$ the
map $B$ is a homeomorphism, locally the restriction of a smooth map.

The map $B : M_0 \to M_0$ is conjugate to a transitive subshift of finite type. Let
$A = (A(i,j))$ be the $k_0 \times k_0$ matrix with $A(i,j)=1$ for $i \neq j$ and
$A(i,i)=0$. Define
$$\Sigma_A = \{ (i_j)_{j\in \Z} : 1 \leq i_j \leq k_0, \ A(i_j,i_{j+1})=1
\ \text{for all } j \},$$
with shift map $\sigma((i_j)) = (i_{j+1})$ and metric $d_\theta(z,w) = \theta^m$ where
$m = \max\{n : z_i = w_i \ \text{for } |i|<n\}$. The coding map $R : M_0 \to \Sigma_A$,
defined by $R(x) = (i_j)$ where $B^j(x) \in K_{i_j}$, is a homeomorphism satisfying
$R \circ B = \sigma \circ R$ (see \cite{PS}).

A Gibbs measure $\mu = \mu_\phi$ for a H\"older continuous $\phi$ on $(\Sigma_A, d_\theta)$
is the unique $\sigma$-invariant probability measure satisfying
\begin{equation}
C^{-1} \leq \frac{\mu(\cc[i_0,\dots,i_{n-1}])}{e^{S_n\phi(z)-nP}} \leq C
\end{equation}
for all cylinders $\cc[i_0,\dots,i_{n-1}]$ and $z \in \cc[i_0,\dots,i_{n-1}]$, where $P$
is the topological pressure and $S_n\phi = \sum_{k=0}^{n-1}\phi\circ\sigma^k$
(see \cite{B,PP}). Such measures are ergodic.

\subsection{Multiplicative Ergodic Theorem}

Let $f$ be an invertible ergodic map on $(\mathcal{X},\mu)$ and $L^n(x) =
L(f^{n-1}(x))\circ\cdots\circ L(x)$ a linear cocycle on $\R^d$. For background see
\cite{Ar,BP,V,BPS}.

\bs
\noindent
{\bf Theorem 2.1.} (Multiplicative Ergodic Theorem).
{\it There exists a full-measure subset $\ll \subset \mathcal{X}$ such that for all
$x \in \ll$:}

\ms

(a) {\it The limit $N(x) = \lim_{n\to\infty}(L^n(x)^t L^n(x))^{1/2n}$ exists.}

\ms

(b) {\it $N(x)$ has $k \geq 1$ distinct eigenvalues $t_k(x) < \cdots < t_1(x)$, and
there exist constants $t_k < \cdots < t_1$ such that $(t^{(n)}_i(x))^{1/n} \to t_i$,
where $t^{(n)}_i(x)$ is the $i$-th eigenvalue of $(L^n(x)^t L^n(x))^{1/2n}$.}

\ms

(c) {\it For each $j$, the eigenspace $E^{(n)}_j(x)$ of $(L^n(x)^t L^n(x))^{1/2n}$
converges to a limiting subspace $E_j(x)$ in $\mathrm{Gr}_{m_j}(\R^d)$, with
$\dim E_j(x) = m_j$ constant.}

\ms

(d) {\it For all $j = 1,\ldots,k$ and $u \in E_j(x)\setminus\{0\}$,
$$\lim_{n\to\pm\infty}\frac{1}{n}\log\|L^n(x)\cdot u\| = \log t_j.$$}

\bs

The numbers $\log t_k < \cdots < \log t_1$ are the (distinct) {\it Lyapunov exponents}
of $L$, and the $E_j(x)$ are the {\it Oseledets subspaces}, invariant under $L$.

Since $B : M_0 \to M_0$ is conjugate to a transitive subshift of finite type (Sect.~2.1),
Theorem~2.1 applies. For open billiards in $\R^3$ the unstable bundle $E^u(x)$ is
two-dimensional, giving either $k=2$ (equal positive exponents $\lambda_1 = \lambda_2$) or
$k=4$ (distinct positive exponents $0 < \lambda_2 < \lambda_1$, with $E_1(x), E_2(x)$
one-dimensional in $E^u(x)$ and negative exponents $-\lambda_1 < -\lambda_2 < 0$).

\section{Proof of Theorem 1.1}
\setcounter{equation}{0}

Our argument is based on the representation of the Poincar\'e map due to Petkov and
Vogel \cite{PV} (see \cite{PS}, Sect.~2.3), also used in \cite{AmSt} for spherical obstacles.
The key novelty is the ellipsoidal geometry, which introduces two distinct curvatures
$\kappa_i = 1/r_i$ (within $\Gamma$) and $\hat\kappa_i = \beta/r_i$ (orthogonal to $\Gamma$),
forcing the two Lyapunov exponents apart. We include all essential steps, referring to
\cite{AmSt} for parts that carry over unchanged.

\bs

Let $\mu=\mu_\phi$ be a Gibbs measure on $\sa$ defined by a H\"older continuous function
$\phi$. Denote by $\lambda_1 \geq \lambda_2 > 0$ the Lyapunov exponents of the billiard ball
map $B$ on the non-wandering set $M_0$ with respect to $\mu$ (here we identify $M_0$ with
$\sa$ via the coding map $R$; see Sect.~2.1). Let $\Sigma_0 \subset \sa$ correspond to $\ll$
under the natural conjugacy between $\sa$ and $\mathcal{X}$.

Fix $\tx_0=(p_0,v_0) \in M_0$ with $R(\tx_0)\in \Sigma_0$, let $\gamma$ be its billiard
trajectory with successive reflection points $q_1,q_2,\dots$, and assume it is never
perpendicular to $\partial K$ at a reflection point. For $i \geq 1$, let $\Pi_i$ be the
plane through $q_i$ orthogonal to $\overrightarrow{q_iq_{i+1}}$, $\omega_i$ the unit
vector along $\overrightarrow{q_iq_{i+1}}$, $\alpha_i=T_{q_i}(\partial K)$,
$\sigma_i$ the reflection in $\alpha_i$, $\nu_i$ the outward unit normal at $q_i$,
$d_i=\|q_{i-1}-q_i\|$, and $\theta_i=2\langle\nu_i,\omega_i\rangle \geq 2c_1 > 0$.

The full derivation of the coordinate system, the maps $\Phi_{i+1}$, and the local conjugacy
between $\Phi = \Phi_m\circ\cdots\circ\Phi_1$ and the billiard flow follows exactly as in
\cite{AmSt}, Sect.~3 (see also \cite{PS}, Sect.~2.3). The outcome is that the Lyapunov
exponent $\lambda(v)$ at $x_0$ in direction $v \in E^u(q_0)$ can be expressed as
\begin{equation}
\lambda(v) = \lim_{m\to\infty}\,\frac{1}{m}\log\|\pr_2(\Pm_\gamma(0,v))\| , \label{eq:lyap}
\end{equation}
where $\pr_2(u',v')=v'$ and $\Pm_\gamma : \Pi_0\times\Pi_0 \to \Pi_m\times\Pi_m$ is the
linearised Poincar\'e map described in Lemma~3.1 below. Define $s_i$ and $\psi_i$ as in
\cite{AmSt}, equations (3.5)--(3.6).

\bs

\begin{lem}\label{lem:poincare}
{\it The map $\Pm_\gamma : \Pi_0 \times \Pi_0 \longrightarrow \Pi_m \times \Pi_m$
is a linear symplectic map with matrix representation
\be
\Pm_\gamma =
 \left(
\begin{array}{cc}
s_m & 0\\
0  & s_m
\end{array}
\right) \cdot
\left(
\begin{array}{cc}
I & d_m I\\
\psi_m  & I + d_m \psi_m
\end{array}
\right)
\cdots
\left(
\begin{array}{cc}
I & d_1 I\\
\psi_1  & I + d_1 \psi_1
\end{array}
\right).
\ee
}
\end{lem}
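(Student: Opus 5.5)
The plan is to derive the matrix form of $\Pm_\gamma$ by composing the linearisations of the two elementary pieces of the billiard dynamics. These are free flight between $\Pi_{i-1}$ and $\Pi_i$, and reflection at $q_i$. We follow the Petkov--Vogel construction as in \cite{PS}, Sect.~2.3 and \cite{AmSt}, Sect.~3. The state near the reference ray $\gamma$ is a pair $(u,v)$, where $u\in\Pi_{i}$ is the transversal displacement of the base point and $v\in\Pi_i$ is the first-order variation of the unit direction (so $v\perp\omega_i$). We write $\Phi_{i+1}$ for the map taking nearby rays crossing $\Pi_i$ to their crossings of $\Pi_{i+1}$, and compute $D\Phi_{i+1}$ at the reference ray.

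\textbf{Step 1 (free flight).} A ray through $q_{i-1}+u$ with direction $\omega_{i-1}+v$ travels a distance $d_i$ to first order before reaching the neighbourhood of $q_i$. Its linearised displacement is therefore $u+d_i v$ and its direction variation stays $v$. This gives the shear $\begin{pmatrix} I & d_iI\\ 0& I\end{pmatrix}$.

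\textbf{Step 2 (reflection).} Reflection in $\alpha_i$ maps $\Pi'_i$ (the plane orthogonal to $\omega_{i-1}$) isometrically to $\Pi_i$ via $\sigma_i$. Curvature enters only through the direction. If $\tilde u$ is the incoming displacement, the outgoing direction variation is $\sigma_i v+\psi_i\tilde u$. Here $\psi_i$ is the symmetric operator on $\Pi_i$ obtained by pulling back the second fundamental form $S_{q_i}$ of $\partial K$ along the projection from $\Pi_i$ to $\alpha_i$. Up to the identification conventions of \cite{AmSt}, (3.5)--(3.6), it is $\psi_i=\theta_i\,\sigma_i^{-1}\circ V_i^{*} S_{q_i} V_i$, with $V_i$ the projection along $\omega_i$.

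The isometries $\sigma_i$ are collected into the operators $s_i$. Using $\sigma_i$ to identify the planes, each step becomes
$$\begin{pmatrix} I&0\\ \psi_i&I\end{pmatrix}\begin{pmatrix} I & d_iI\\ 0& I\end{pmatrix}=\begin{pmatrix} I & d_iI\\ \psi_i& I+d_i\psi_i\end{pmatrix}.$$
The accumulated identifications commute with the block structure, since each is an isometry acting diagonally as $\mathrm{diag}(\sigma,\sigma)$. They can therefore be pushed to the left, producing the single factor $\mathrm{diag}(s_m,s_m)$. This reproduces exactly the stated product.

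\textbf{Step 3 (symplecticity).} The shear factor is symplectic for any scalar $d_i$. The lower-triangular factor $\begin{pmatrix} I&0\\ \psi_i&I\end{pmatrix}$ is symplectic exactly when $\psi_i$ is symmetric, which holds because the second fundamental form is symmetric and $\psi_i$ is its conjugate by the projection and its adjoint. Each $\mathrm{diag}(s,s)$ with $s$ orthogonal is symplectic. Products of symplectic maps are symplectic, which gives the claim.

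The main obstacle is Step 2: getting the correct form of $\psi_i$ for the ellipsoidal boundary, including the factor $\theta_i=2\langle \nu_i,\omega_i\rangle$ from the oblique incidence, and checking that the projection conventions keep $\psi_i$ self-adjoint. I would first verify this in the spherical case of \cite{AmSt}. I would then observe that only $S_{q_i}$ changes. On $M_0\subset\Gamma$, its eigenvalues are $\kappa_i$ along the tangent direction in $\Gamma$, or rather the in-plane curvature of the ellipse section, and $\hkappa_i$-type values along $n$. The derivation itself is therefore unchanged.
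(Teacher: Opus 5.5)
Your outline is the argument the paper relies on. The paper gives no proof beyond citing \cite{AmSt} and \cite{PS}, Theorem~2.3.1, and that is exactly the Petkov--Vogel computation you sketch: free-flight shear, linearised reflection, isometries collected on the left, and symplecticity checked factor by factor.

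One step of your bookkeeping is wrong, however, and Step~3 depends on it. The isometry $\mathrm{diag}(\sigma,\sigma)$ commutes with the shear $\bigl(\begin{smallmatrix} I & dI\\ 0 & I\end{smallmatrix}\bigr)$ but not with the kick $\bigl(\begin{smallmatrix} I & 0\\ \psi & I\end{smallmatrix}\bigr)$. Pushing it to the left therefore conjugates the curvature operator rather than leaving it alone.

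Concretely, let $\tpsi_i=\theta_i V_i^* G_i V_i$, a symmetric operator on $\Pi_i$, where $G_i$ is the second fundamental form at $q_i$. The outgoing direction variation is $\sigma_i v+\tpsi_i\sigma_i\tilde u$, since $V_i\sigma_i\tilde u$ is the first-order displacement of the hit point on $\alpha_i$. So the $i$-th step is $\bigl(\begin{smallmatrix} I & 0\\ \tpsi_i & I\end{smallmatrix}\bigr)\mathrm{diag}(\sigma_i,\sigma_i)\bigl(\begin{smallmatrix} I & d_iI\\ 0 & I\end{smallmatrix}\bigr)$.

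Now put $s_i=\sigma_i\circ\cdots\circ\sigma_1\colon\Pi_0\to\Pi_i$ and $\psi_i=s_i^{-1}\circ\tpsi_i\circ s_i$, an operator on $\Pi_0$. The $i$-th step then equals $\mathrm{diag}(s_i,s_i)\,M_i\,\mathrm{diag}(s_{i-1},s_{i-1})^{-1}$ with $M_i=\bigl(\begin{smallmatrix} I & d_iI\\ \psi_i & I+d_i\psi_i\end{smallmatrix}\bigr)$. The product telescopes to the stated formula.

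These $\psi_i$ are symmetric because $s_i$ is an isometry, so your Step~3 goes through for them. By contrast, the operator $\theta_i\sigma_i^{-1}\circ V_i^*S_{q_i}V_i$ you wrote maps $\Pi_i$ to $\omega_{i-1}^{\perp}$. It is not self-adjoint on any single plane, so the symplecticity criterion cannot be applied to it.
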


The proof is identical to that in \cite{AmSt} (see also \cite{PS}, Theorem.~2.3.1).

\bs

We now describe the key modification introduced by the ellipsoidal geometry. Since all
centres $O_i$ lie in $\Gamma$, every trajectory in $M_0$ is contained in $\Gamma$, all
$\omega_i, \nu_i$ lie in $\Gamma$, and each $\Pi_i$ is perpendicular to $\Gamma$.
Set $\kappa_i=1/r_i$ and $\hat\kappa_i=\beta/r_i$. The second fundamental form $G_i$ of
$\partial K_i$ at $q_i$ satisfies
\begin{equation}
G_i(\xi)=\kappa_i \,\xi \ \text{ for } \xi \in \alpha_i \cap \Gamma, \qquad
G_i(\eta)=\hat\kappa_i \,\eta \ \text{ for } \eta\in \alpha_i,\ \eta\perp \Gamma,
\label{eq:Gi}
\end{equation}
the only difference from \cite{AmSt} where $\kappa_i = \hat\kappa_i$.
Let $E_i = \Pi_i \cap \mathrm{span}(n)$ (where $n=(0,0,1)$), $F_i = \Pi_i \cap \Gamma$,
and $f_i$ the unit vector spanning $F_i$. Using \eqref{eq:Gi} and the symmetry map $\tpsi_i$
(see \cite{AmSt}, Sect.~3):
\begin{equation}
(\tpsi_i)_{|E_i} = \hat\kappa_i\theta_i\,I , \qquad
\tpsi_i(f_i) = \kappa_i\,\ttheta_i\,f_i ,
\label{eq:psi}
\end{equation}
where $\ttheta_i = L_i\theta_i$, $L_i = 1/\langle\omega_i,\nu_i\rangle^2$, and
\begin{equation}
1 \leq L_i \leq L_0 = 1/c_1^2. \label{eq:Li}
\end{equation}
{\bf We will assume $\beta > L_0 = 1/c_1^2$.}

\bs

From \eqref{eq:psi} the Poincar\'e map decouples:
$\Pm_\gamma : E_0\times E_0 \to E_m\times E_m$ and
$\Pm_\gamma : F_0\times F_0 \to F_m\times F_m$.
Setting $a_i=\hat\kappa_i\theta_i$, $\ta_i=\kappa_i\ttheta_i$,
$b_i=1+\hat\kappa_i d_i\theta_i$, $\tb_i=1+\kappa_i d_i\ttheta_i$,
we have $a_i > \ta_i$ and $b_i > \tb_i$ since $\beta > L_0 \geq L_i$.
With the block structure of Lemma~\ref{lem:poincare}, the restrictions of $\Pm_\gamma$ to
$E_0\times E_0$ and $F_0\times F_0$ take the form \eqref{eq:lyap} with $(a_i,b_i)$ and $(\ta_i,\tb_i)$
respectively.

\ms

The following lemma was proved in \cite{AmSt} (see Lemma~3.2 there).

\bs

\begin{lem}\label{lem:scalar}
{\it For every $j = 1,2, \ldots, m$ there exist scalar functions
$\Deone_j, \Detwo_j, \Dethree_j,$ $ \Defour_j$ (resp.\ $\tDeone_j, \tDetwo_j, \tDethree_j,
\tDefour_j$), which are polynomials in $\hat\kappa_i, d_i, \theta_i$
(resp.\ $\kappa_i, \ttheta_i, d_i$) with positive coefficients, such that
\begin{equation}
\prod_{i=j}^{1}
\begin{pmatrix} I & d_i I \\ a_i I & b_i I \end{pmatrix}
=
\begin{pmatrix} \Deone_j I & \Detwo_j I \\ \Dethree_j I & \Defour_j I \end{pmatrix},
\qquad
\prod_{i=j}^{1}
\begin{pmatrix} I & d_i I \\ \ta_i I & \tb_i I \end{pmatrix}
=
\begin{pmatrix} \tDeone_j I & \tDetwo_j I \\ \tDethree_j I & \tDefour_j I \end{pmatrix}.
\end{equation}
}
\end{lem}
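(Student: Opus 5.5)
We prove this by induction on $j$, tracking the scalar entries of the partial products explicitly. The key observation is that every factor $\begin{pmatrix} I & d_i I \\ a_i I & b_i I \end{pmatrix}$ has all four blocks equal to scalar multiples of the identity on the two-dimensional space $\Pi_i$. Here we use the decoupling from \eqref{eq:psi}: on $E_i$ the curvature block acts as $a_i I$, and on $F_i$ as $\ta_i I$. Scalar multiples of the identity form a commutative subalgebra closed under addition and multiplication. Hence any product of such block matrices is again a block matrix whose blocks are scalar multiples of $I$, with the scalar $2\times 2$ matrix obtained by multiplying the corresponding $2\times 2$ scalar matrices $\begin{pmatrix} 1 & d_i \\ a_i & b_i\end{pmatrix}$. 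The identification of $\Pi_i$ with $\Pi_{i+1}$ is via the parallel translation and reflection built into the coordinates of Lemma~\ref{lem:poincare}, which respect the splitting $E_i\oplus F_i$. So we may work with scalars throughout.

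\textbf{Base case.} For $j=1$ set $\Deone_1=1$, $\Detwo_1=d_1$, $\Dethree_1=a_1=\hat\kappa_1\theta_1$ and $\Defour_1=b_1=1+\hat\kappa_1 d_1\theta_1$. Each of these is a polynomial with positive coefficients in $\hat\kappa_1,d_1,\theta_1$.

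\textbf{Inductive step.} Multiply on the left by the $(j+1)$-st factor. This gives the recursion
\begin{align*}
\Deone_{j+1}&=\Deone_j+d_{j+1}\Dethree_j, &
\Detwo_{j+1}&=\Detwo_j+d_{j+1}\Defour_j,\\
\Dethree_{j+1}&=a_{j+1}\Deone_j+b_{j+1}\Dethree_j, &
\Defour_{j+1}&=a_{j+1}\Detwo_j+b_{j+1}\Defour_j .
\end{align*}
Both $a_{j+1}=\hat\kappa_{j+1}\theta_{j+1}$ and $b_{j+1}=1+\hat\kappa_{j+1}d_{j+1}\theta_{j+1}$ are polynomials with positive coefficients. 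Sums and products of such polynomials again have nonnegative coefficients. No cancellation can occur, since only additions and multiplications appear, so the coefficients stay nonnegative and the nonzero ones are positive. This closes the induction.

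The tilde case is identical, with $\ta_i=\kappa_i\ttheta_i$ and $\tb_i=1+\kappa_i d_i\ttheta_i$ in place of $a_i,b_i$. The resulting polynomials are in $\kappa_i,\ttheta_i,d_i$.

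The only point needing care is the justification that the blocks really are scalar. This means checking that the curvature operator $\tpsi_i$ is a scalar on each one-dimensional space $E_i$ and $F_i$, and that the identifications between consecutive planes $\Pi_i$ map $E_i\to E_{i+1}$ and $F_i\to F_{i+1}$. Both facts follow from \eqref{eq:psi} and from all $\omega_i,\nu_i$ lying in $\Gamma$, so that everything is equivariant under the symmetry $z\mapsto -z$. Once this is granted, the rest is the routine recursion above.
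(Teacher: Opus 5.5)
Your proof is correct and follows the argument the paper relies on (it defers to \cite{AmSt}, Lemma~3.2): induction on $j$ by left multiplication, which gives exactly the recurrences \eqref{eq:recA}--\eqref{eq:recB}, with positivity of coefficients preserved because only sums and products occur. One minor slip is that the blocks are scalar only on the one-dimensional spaces $E_i$ and $F_i$, not on all of $\Pi_i$ (there $\tpsi_i$ has the two eigenvalues $\hat\kappa_i\theta_i$ and $\kappa_i\ttheta_i$); also, the lemma is a purely algebraic identity about the displayed matrices, so the geometric identification you discuss at the end is not needed for it.
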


Combining \eqref{eq:lyap} with the representation of Lemma~\ref{lem:scalar}, we obtain the following.

\bs

\begin{cor}\label{cor:lyap}
{\it For all $v \in E_0$ and $\tv \in F_0$ with $\|v\| = \|\tv\| = 1$,
\be
\lambda(v) = \lim_{m\to\infty} \frac{1}{m} \log \Defour_m \quad , \quad
\lambda(\tv) = \lim_{m\to\infty} \frac{1}{m} \log \tDefour_m .
\ee
}
\end{cor}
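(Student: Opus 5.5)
The plan is to start from formula \eqref{eq:lyap} and use the decoupling established just above. Fix $v\in E_0$ with $\|v\|=1$. Since $\Pm_\gamma$ maps $E_0\times E_0$ into $E_m\times E_m$, and since by \eqref{eq:psi} $\tpsi_i$ acts on $E_i$ as multiplication by the scalar $a_i=\hat\kappa_i\theta_i$, each block factor in Lemma~\ref{lem:poincare} restricted to $E_{i-1}\times E_{i-1}$ acts, in the unit bases, as the scalar $2\times 2$ matrix $\begin{pmatrix}1 & d_i\\ a_i & b_i\end{pmatrix}$. The only caveat is that the identifications $E_{i-1}\to E_i$ come from the parallel/symmetry maps of \cite{AmSt}, and these are isometries sending unit vectors to $\pm$ unit vectors. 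The same reasoning applies on $F_0\times F_0$ with $(\ta_i,\tb_i)$ and the unit vectors $f_i$. The factor $s_m$ is an isometry (a composition of reflections), so it does not change norms.

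Next I would apply Lemma~\ref{lem:scalar} to the product and evaluate it on the vector $(0,v)$. This gives
\[\Pm_\gamma(0,v)=s_m\bigl(\Detwo_m v_m,\ \Defour_m v_m\bigr),\]
where $v_m$ is a unit vector in $E_m$. Hence $\|\pr_2(\Pm_\gamma(0,v))\|=|\Defour_m|=\Defour_m$, the last equality because $\Defour_m$ is a polynomial in positive quantities with positive coefficients. Substituting into \eqref{eq:lyap} yields $\lambda(v)=\lim_m \frac1m\log\Defour_m$.

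The identical computation on $F_0$ gives $\lambda(\tv)=\lim_m\frac1m\log\tDefour_m$. Both limits exist because \eqref{eq:lyap} holds at $\mu$-a.e.\ point: the base point was chosen with $R(\tx_0)\in\Sigma_0$, so Theorem~2.1(d) applies once we know that $E_0$ and $F_0$ are Oseledets directions. They are, since they are invariant lines of the cocycle inside $E^u$. If $\lambda_1=\lambda_2$, every direction of $E^u$ already gives the common exponent. If the exponents differ, note that the cocycle preserves the two complementary invariant line fields $E$ and $F$. Since a vector in the slower Oseledets space $E_2(x)$ has growth $\lambda_2$, and a sum of components with different growth rates has the larger one, each invariant line must lie in a single Oseledets space, so the limits exist.

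The main point needing care is the first step: checking that the coordinate identifications between $\Pi_{i-1}$ and $\Pi_i$ in \cite{AmSt} really preserve the splitting $E_i\oplus F_i$ with unit scalars. I would verify this using the invariance of the whole configuration under $z\mapsto -z$. Every linear map in the construction commutes with the differential of this symmetry, so each map preserves the $-1$ eigenspace (the direction spanned by $n$) and the $+1$ eigenspace (the in-plane direction). Because these maps are isometric, they act on each line by $\pm1$, and the absolute values then make the signs irrelevant.
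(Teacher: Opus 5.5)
Your proposal is correct and is essentially the paper's argument. The paper evaluates the product of Lemma~\ref{lem:scalar} on $(0,v)$ in \eqref{eq:lyap}, so the second component is $\Defour_m v$ up to the isometry $s_m$; your checks that the reflections respect the splitting $E_i\oplus F_i$ and that $\Defour_m>0$ just make this explicit.

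Your last paragraph, on existence of the limits, is not needed, since \eqref{eq:lyap} already gives $\lambda(v)$ as a limit. As written, its claim that each invariant line lies in a single Oseledets space does not follow from forward growth rates alone. You would also need to compare backward rates, for example via Birkhoff's theorem for the scalar cocycle on the line. Nothing in the corollary depends on this.
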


\ms

The recurrences for these quantities are (see \cite{AmSt}):
\begin{equation}
\Detwo_{j+1} = \Detwo_j + d_{j+1}\, \Defour_j \quad, \quad
\Defour_{j+1} = a_{j+1}\,\Detwo_j + b_{j+1}\,\Defour_j , \label{eq:recA}
\end{equation}
\begin{equation}
\tDetwo_{j+1} = \tDetwo_j + d_{j+1}\,\tDefour_j \quad, \quad
\tDefour_{j+1} = \ta_{j+1}\,\tDetwo_j + \tb_{j+1}\,\tDefour_j , \label{eq:recB}
\end{equation}
with $\Detwo_0 = \tDetwo_0 = \Defour_0 = \tDefour_0 = 1$.

\bs

\begin{lem}\label{lem:delta}
{\it For all $j \geq 1$,
$\Detwo_j \leq \Defour_j$ and $\tDetwo_j \leq \tDefour_j$.}
\end{lem}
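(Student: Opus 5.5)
Induction on $j$ using the recurrences \eqref{eq:recA} and \eqref{eq:recB}; the two sequences are handled identically, since the argument uses only that $d_{j+1}>0$, $a_{j+1}>0$ and that $b_{j+1}$ exceeds $1$ by at least $a_{j+1}d_{j+1}$. I will also carry along positivity of all the quantities, which holds because they are built from positive numbers; this also follows from Lemma~\ref{lem:scalar}.

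For the base case, $\Detwo_0=\Defour_0=1$ gives $\Detwo_1 = 1+d_1$ and $\Defour_1 = a_1 + b_1 = a_1 + 1 + \hat\kappa_1 d_1\theta_1$. Since $a_1=\hat\kappa_1\theta_1$, we have $b_1 = 1 + a_1 d_1$. So the claim $\Defour_1\ge \Detwo_1$ is equivalent to $a_1 + a_1 d_1 \ge d_1$. This is where I expect the main obstacle: it needs the coefficient $a_1$ (and $\ta_1=\kappa_1\ttheta_1$) to be at least $1$. That should follow from the standing assumptions. We have $\theta_i\ge 2c_1$ and $\kappa_i = 1/r_i\ge 1/r_0$. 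Here $r_0<d_0/3<1/3$ because $|O_iO_j|\le 1$. Hence $\kappa_i\theta_i \ge 6c_1$. I would either justify $a_i\ge1$ from this (possibly using smallness of the radii, which the paper assumes, so that $2c_1/r_0\ge 1$), or, better, avoid it by checking directly that $\Defour_1-\Detwo_1 = a_1(1+d_1) - d_1$. If needed I would invoke smallness of $r_0$ explicitly, noting $\hat\kappa_i\ge\kappa_i$ and $\ttheta_i\ge\theta_i$, so that $\ta_i\ge \kappa_i\theta_i$ and $a_i \ge \kappa_i\theta_i$, making the one condition $\kappa_i\theta_i\ge1$ suffice for both sequences.

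For the inductive step, assume $\Detwo_j\le\Defour_j$. Using $b_{j+1} = 1 + a_{j+1}d_{j+1}$,
\[
\Defour_{j+1}-\Detwo_{j+1} = a_{j+1}\Detwo_j + \Defour_j + a_{j+1}d_{j+1}\Defour_j - \Detwo_j - d_{j+1}\Defour_j .
\]
Rewrite this as $(\Defour_j-\Detwo_j) + a_{j+1}\Detwo_j + (a_{j+1}-1)d_{j+1}\Defour_j$. The first term is nonnegative by the inductive hypothesis, the second is positive, and the third is nonnegative once $a_{j+1}\ge1$. So the same condition $a_i\ge 1$ (resp.\ $\ta_i\ge1$) drives both the base case and the inductive step. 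Even without it, the step still goes through provided $a_{j+1}\Detwo_j \ge (1-a_{j+1})d_{j+1}\Defour_j$, but I would aim for the clean bound $\kappa_i\theta_i\ge1$ coming from small radii.

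The same computation with tildes and $\tb_{j+1}=1+\ta_{j+1}d_{j+1}$ gives $\tDetwo_j\le\tDefour_j$, which completes the proof.
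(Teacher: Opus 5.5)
Your induction, based on the identity $\Defour_{j+1}-\Detwo_{j+1}=(\Defour_j-\Detwo_j)+a_{j+1}\Detwo_j+(a_{j+1}-1)d_{j+1}\Defour_j$ and its tilde analogue (using $b_i=1+a_id_i$, $\tilde b_i=1+\tilde a_i d_i$), is the natural argument. It is presumably what the paper defers to \cite{AmSt}. Its only real input is $a_i\ge 1$ and $\tilde a_i\ge 1$. That is exactly the point you leave open, and the route you sketch does not close it under the stated hypotheses. You discard the factor $L_i$ in $\ttheta_i$ and the factor $\beta$ in $\hkappa_i$, which reduces everything to $\kappa_i\theta_i\ge 1$. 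That in turn would need something like $r_0\le 2c_1$. This is an extra condition tying $r_0$ to $c_1$, and $c_1$ itself depends on the radii. It is not among the assumptions, and your bound $\kappa_i\theta_i>6c_1$ does not give it when $c_1<1/6$. As written, your proof establishes the lemma only under an additional hypothesis.

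The missing observation is that no extra assumption is needed. Since $\theta_i=2\langle\nu_i,\omega_i\rangle$ and $L_i=\langle\omega_i,\nu_i\rangle^{-2}$, you get $\ttheta_i=L_i\theta_i=2/\langle\omega_i,\nu_i\rangle\ge 2$. Hence $\tilde a_i=\kappa_i\ttheta_i\ge 2\kappa_i\ge 2/r_0>6$, because $r_0<d_0/3<1/3$. Moreover $a_i=\beta\kappa_i\theta_i>L_i\kappa_i\theta_i=\tilde a_i$ by the standing assumption $\beta>L_0\ge L_i$, made just before the lemma. With $a_i,\tilde a_i>1$ your inductive step goes through for both sequences.

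A minor remark on the base case: the matrix product in Lemma~\ref{lem:scalar} starts from the identity, so in fact $\Detwo_0=0$ and $\Defour_0=1$. This is what the proof of Lemma~\ref{lem:inductive} uses when it writes $\Detwo_1=d_1$, $\Defour_1=b_1$. Your argument works with either initialisation, since the step from $j=0$ only uses $\Detwo_0\le\Defour_0$ and nonnegativity.
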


\ms

\noindent
{\it Proof.} Same as in \cite{AmSt}. \endofproof

\bs

We have $1 < 1/r_0 \leq \kappa_i \leq 1/\tr_0$,
$2c_1 \leq \theta_i \leq 2$ and $d_0/3 \leq d_i \leq d_0 < 1$ for all $i$. Setting
$$s = s(\beta) = \min\left\{ \log \left( \frac{\beta d_0 \tr_0}{6c_1} \right)\;,\,
\log (\beta c_1^2) \right\} > 0,$$
one verifies (as in \cite{AmSt}) that
\begin{equation}
b_i \geq e^s\, \tb_i \quad \text{and} \quad a_i \geq e^s\, \ta_i \quad \text{for all } i \geq 1. \label{eq:key}
\end{equation}

\bs

\begin{lem}\label{lem:inductive}
{\it There exists a sequence $\{t_i\}_{i\geq 1}$ of non-negative numbers
such that, setting $u_1 = 1$, $v_1 = e^s$, and
$$u_{i+1} = e^{t_{i+1}}\,u_i \quad,\quad v_i = e^s\,u_i,$$
we have $u_i \geq 1$ for all $i \geq 1$,
\begin{equation}
\Detwo_i \geq u_i\,\tDetwo_i \quad \text{and} \quad \Defour_i \geq v_i\,\tDefour_i \label{eq:uv}
\end{equation}
for all $i \geq 1$, and $0 \leq t_{i+1} < s$ for all $i \geq 1$.}
\end{lem}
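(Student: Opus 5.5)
Let me recall the setup: $\Detwo_0=\Defour_0=1$, and the recurrences \eqref{eq:recA}, \eqref{eq:recB} are linear with positive coefficients. The statement asks for a nondecreasing $u_i\ge 1$ with $u_{i+1}/u_i=e^{t_{i+1}}\in[1,e^s)$, $\Detwo_i\ge u_i\tDetwo_i$, and $\Defour_i\ge e^s u_i\tDefour_i$.

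The proof is by induction on $i$, defining $t_{i+1}$ explicitly at each step.

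\emph{Base case.} From \eqref{eq:recA} and \eqref{eq:recB} we have $\Detwo_1=1+d_1=\tDetwo_1$, so $\Detwo_1\ge u_1\tDetwo_1$ with $u_1=1$. Also $\Defour_1=a_1+b_1\ge e^s(\ta_1+\tb_1)=e^s\tDefour_1$ by \eqref{eq:key}, which gives the second inequality with $v_1=e^s$.

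\emph{Inductive step.} Assume \eqref{eq:uv} holds at $i$ with $u_i\ge1$. Then
$$\Detwo_{i+1}=\Detwo_i+d_{i+1}\Defour_i\ge u_i\tDetwo_i+e^s u_i d_{i+1}\tDefour_i .$$
Compare this with $\tDetwo_{i+1}=\tDetwo_i+d_{i+1}\tDefour_i$. The right side is a weighted average: it equals $u_i\rho_{i+1}\tDetwo_{i+1}$, where
$$\rho_{i+1}=\frac{\tDetwo_i+e^s d_{i+1}\tDefour_i}{\tDetwo_i+d_{i+1}\tDefour_i}\in[1,e^s).$$
The ratio is strictly less than $e^s$ because $\tDetwo_i\ge1>0$ and $s>0$. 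So I set $t_{i+1}=\log\rho_{i+1}$, giving $0\le t_{i+1}<s$, and $u_{i+1}=e^{t_{i+1}}u_i\ge u_i\ge1$. This yields $\Detwo_{i+1}\ge u_{i+1}\tDetwo_{i+1}$.

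For the second inequality, the recurrence and the induction hypothesis give
$$\Defour_{i+1}=a_{i+1}\Detwo_i+b_{i+1}\Defour_i\ge e^s\bigl(\ta_{i+1}u_i\tDetwo_i+\tb_{i+1}e^s u_i\tDefour_i\bigr).$$
I need this to be at least $e^s u_{i+1}\tDefour_{i+1}=e^s u_i\rho_{i+1}(\ta_{i+1}\tDetwo_i+\tb_{i+1}\tDefour_i)$. That reduces to
$$\frac{\ta_{i+1}\tDetwo_i+e^s\tb_{i+1}\tDefour_i}{\ta_{i+1}\tDetwo_i+\tb_{i+1}\tDefour_i}\;\ge\;\frac{\tDetwo_i+e^s d_{i+1}\tDefour_i}{\tDetwo_i+d_{i+1}\tDefour_i}.$$
Both sides are of the form $(X+e^sY)/(X+Y)$, which is increasing in $Y/X$. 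So it suffices that $\tb_{i+1}/\ta_{i+1}\ge d_{i+1}$. This holds because $\tb_{i+1}=1+d_{i+1}\ta_{i+1}>d_{i+1}\ta_{i+1}$.

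The only delicate point is this comparison of the two weighted averages; it is where the choice of $t_{i+1}$ must be made carefully so that the $\Detwo$-gain does not exceed the $\Defour$-gain. It resolves through the identity $\tb=1+d\ta$, and Lemma~\ref{lem:delta} is not even needed here.
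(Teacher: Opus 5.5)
Your proof is correct and has the same skeleton as the paper's: induction on $i$, with the $\Defour$-inequality reduced to $\tb_{i+1}/\ta_{i+1}>d_{i+1}$, which follows from $\tb_{i+1}=1+d_{i+1}\ta_{i+1}$. The genuine difference is the choice of $t_{i+1}$.

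The paper takes the constant $t_{i+1}=\log\frac{d_0e^s+1}{1+d_0}$. For the $\Detwo$-step it needs $\tDetwo_{i+1}\ge(1+d_0)\tDetwo_i$, and in the $\Defour$-step it bounds $\tDefour_i/\tDetwo_i\ge 1$. So it invokes Lemma~\ref{lem:delta} twice, together with the bound $d_{i+1}\ge d_0$. The setup does not actually provide that bound: it only gives $d_{i+1}>d_0/3$. With $d_0$ replaced by $d_0/3$ the paper's argument goes through.

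Your exact choice $e^{t_{i+1}}=\rho_{i+1}$ makes the $\Detwo$-step lossless. It also turns the $\Defour$-step into the monotonicity of $(X+e^sY)/(X+Y)$ in $Y/X$. You therefore need neither Lemma~\ref{lem:delta} nor any lower bound on $d_i$, and for the lemma as stated this is the cleaner argument.

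What the paper's constant buys is a trajectory-independent bound $u_m\ge e^{(m-1)t}$ with $t>0$. That is what is needed when the lemma is used to separate $\lambda(v)$ from $\lambda(\tv)$, because the lemma by itself only yields $v_m=e^{s+t_2+\cdots+t_m}$. With your adaptive $t_{i+1}$, such a bound requires bringing back exactly the ingredients you dropped. Lemma~\ref{lem:delta} and $d_{i+1}\ge\delta:=d_0/3$ give $d_{i+1}\tDefour_i/\tDetwo_i\ge\delta$, hence $\rho_{i+1}\ge(1+\delta e^s)/(1+\delta)$. This recovers, and dominates, the corrected constant of the paper.

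A minor point: the paper's base case uses $\Detwo_1=d_1$ and $\Defour_1=b_1$, i.e.\ $\Detwo_0=0$ as the matrix product gives. You used the literally stated $\Detwo_0=1$ instead. Your argument is valid under either reading, since all you need is $\tDetwo_i>0$ for $i\ge1$.
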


\ms

\noindent
{\it Proof.}  We will do the construction by induction. Set $u_1 = 1$ and $v_1 = e^s$.

From \eqref{eq:recA} and \eqref{eq:recB} we have $\Detwo_1 = d_1 = \tDetwo_1$ and, using \eqref{eq:key},
$\Defour_1 = b_1 \geq e^s \tb_1 = v_1 \tDefour_1$.
So \eqref{eq:uv} holds for $i = 1$.

Assume that $u_i$, $t_i$, $v_i = e^s u_i$ have been constructed for some $i \geq 1$
so that \eqref{eq:uv} holds. We will now define $u_{i+1}$, $t_{i+1}$ and $v_{i+1} = e^s u_{i+1}$.

Using \eqref{eq:recA} and the inductive hypothesis \eqref{eq:uv} we get
$$\Detwo_{i+1} = \Detwo_i + d_{i+1}\,\Defour_i
\geq u_i\,\tDetwo_i + d_{i+1}\, e^s u_i\,\tDefour_i.$$
From \eqref{eq:recB}, $d_{i+1}\,\tDefour_i = \tDetwo_{i+1} - \tDetwo_i$, so the above yields
\begin{equation}
\Detwo_{i+1} \geq u_i\bigl(e^s\,\tDetwo_{i+1} - (e^s - 1)\tDetwo_i\bigr). \label{eq:D2est}
\end{equation}
From \eqref{eq:recB} and Lemma~\ref{lem:delta}, $\tDetwo_{i+1} \geq (1+d_0)\,\tDetwo_i$, hence
$\tDetwo_i \leq \tDetwo_{i+1}/(1+d_0)$. Substituting into \eqref{eq:D2est} gives
$$\Detwo_{i+1} \geq u_i\left(\frac{d_0 e^s + 1}{1+d_0}\right)\tDetwo_{i+1}.$$
Define $t_{i+1} > 0$ by
\begin{equation}
t_{i+1} = \log\frac{d_0 e^s + 1}{1+d_0} < s, \label{eq:ti}
\end{equation}
and set $u_{i+1} = e^{t_{i+1}} u_i \geq u_i \geq 1$.
Then $\Detwo_{i+1} \geq u_{i+1}\,\tDetwo_{i+1}$, so \eqref{eq:uv} holds for $\Detwo_{i+1}$.

Now set $v_{i+1} = e^s u_{i+1}$. We check that \eqref{eq:uv} holds for $\Defour_{i+1}$.
It follows from \eqref{eq:recA}, \eqref{eq:key} and the inductive assumptions that
\begin{eqnarray*}
\Defour_{i+1}
&=& a_{i+1}\,\Detwo_i + b_{i+1}\,\Defour_i
\geq e^s\ta_{i+1}\,\Detwo_i + e^s\tb_{i+1}\,\Defour_i\\
&\geq& e^s\bigl(\ta_{i+1}\,u_i\,\tDetwo_i + \tb_{i+1}\, e^s u_i\,\tDefour_i\bigr)\\
&=& e^s u_{i+1}\bigl(\ta_{i+1}\, e^{-t_{i+1}}\,\tDetwo_i
   + \tb_{i+1}\, e^{s-t_{i+1}}\,\tDefour_i\bigr).
\end{eqnarray*}
We will check now that $v_{i+1}\,\tDefour_{i+1} = e^s u_{i+1}\,\tDefour_{i+1}$ satisfies
$$e^s u_{i+1}\bigl(\ta_{i+1}\, e^{-t_{i+1}}\,\tDetwo_i
+ \tb_{i+1}\, e^{s-t_{i+1}}\,\tDefour_i\bigr) \geq v_{i+1}\,\tDefour_{i+1},$$
that is,
\begin{equation}
\frac{\tb_{i+1}\,\tDefour_i}{\ta_{i+1}\,\tDetwo_i}
\geq \frac{e^{t_{i+1}}-1}{e^s - e^{t_{i+1}}}. \label{eq:ratio}
\end{equation}
The definition \eqref{eq:ti} and a simple calculation give
$$\frac{e^{t_{i+1}} -1}{e^s - e^{t_{i+1}}} = \frac{ \frac{d_0 e^s + 1}{1+d_0} -1}{e^s -  \frac{d_0 e^s + 1}{1+d_0}}
= \frac{d_0 e^s + 1 -1 -d_0}{e^s + d_0 e^s - d_0 e^s -1} = \frac{d_0 (e^s-1)}{e^s-1} = d_0 .$$
On the other hand, for the left-hand side of \eqref{eq:ratio}, it follows from Lemma~\ref{lem:delta} that
$$\frac{\tb_{i+1}\,\tDefour_i}{\ta_{i+1}\,\tDetwo_i}
\geq \frac{\tb_{i+1}}{\ta_{i+1}}
= \frac{1 + \kappa_{i+1} d_{i+1} L_{i+1}\theta_{i+1}}{\kappa_{i+1} L_{i+1}\theta_{i+1}}
> d_{i+1} \geq d_0,$$
since $d_{i+1} \geq d_0$. This proves \eqref{eq:ratio}. The latter in turn implies
$\Defour_{i+1} \geq e^s u_{i+1}\,\tDefour_{i+1} = v_{i+1}\,\tDefour_{i+1}$,
and therefore \eqref{eq:uv} holds with $i$ replaced by $i+1$.

This completes the induction.
\endofproof

\bs

We will now show that both $\lambda(v)$ and $\lambda(\tv)$ are strictly positive.

\begin{lem}\label{lem:positive}
{\it We have $\lambda(v) > 0$ and $\lambda(\tv) > 0$.}
\end{lem}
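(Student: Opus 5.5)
By Corollary~\ref{cor:lyap} it suffices to show that $\Defour_m$ and $\tDefour_m$ grow at least geometrically in $m$, with a rate independent of $m$. Lemma~\ref{lem:inductive} gives $\Defour_m \geq v_m \tDefour_m \geq \tDefour_m$. Hence $\lambda(v) \geq \lambda(\tv)$, and it is enough to prove $\lambda(\tv) > 0$.

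We work directly with the recurrence \eqref{eq:recB}. By Lemma~\ref{lem:scalar} all the quantities $\tDetwo_j, \tDefour_j$ are positive. Dropping the nonnegative term $\ta_{j+1}\tDetwo_j$ in \eqref{eq:recB} gives
$$\tDefour_{j+1} \geq \tb_{j+1}\,\tDefour_j = (1+\kappa_{j+1} d_{j+1}\ttheta_{j+1})\,\tDefour_j .$$
We then bound the multiplier from below uniformly. We have $\kappa_{j+1} \geq 1/r_0 > 1$ and $d_{j+1} \geq d_0/3$ by \eqref{eq:di}. Also $\ttheta_{j+1} = L_{j+1}\theta_{j+1} \geq \theta_{j+1} \geq 2c_1$ by \eqref{eq:Li}. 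Consequently
$$\tb_{j+1} \geq 1 + \frac{2c_1 d_0}{3 r_0} =: e^{\delta_0}, \qquad \delta_0 > 0,$$
where $\delta_0$ depends only on the configuration constants $c_1, d_0, r_0$ and not on $j$ or on the trajectory.

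Iterating from $\tDefour_0 = 1$ yields $\tDefour_m \geq e^{m\delta_0}$ for all $m$. Corollary~\ref{cor:lyap} then gives $\lambda(\tv) \geq \delta_0 > 0$. In the same way, $b_{j+1} \geq \tb_{j+1}$ (or the comparison $\Defour_m \geq \tDefour_m$ noted above) gives $\lambda(v) \geq \delta_0 > 0$. If a sharper bound is wanted, Lemma~\ref{lem:delta} can be used to also keep the $\ta_{j+1}\tDetwo_j$ contribution, but this is not needed for positivity.

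The only point needing care is that the limits in Corollary~\ref{cor:lyap} exist and are the exponents for the chosen $\mu$-generic point. This is already supplied by Theorem~2.1 and Corollary~\ref{cor:lyap}, so the lower bound on $\frac1m\log\tDefour_m$ passes to the limit directly. I expect no real obstacle beyond checking that the uniform lower bounds on $\kappa_i$, $d_i$ and $\theta_i$ hold along every trajectory in $M_0$, which follows from the standing assumptions and \eqref{eq:di}.
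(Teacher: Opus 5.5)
Your proposal is correct and is essentially the paper's proof. You drop the term $\ta_{j+1}\tDetwo_j$ in \eqref{eq:recB}, bound $\tb_{j+1}\geq 1+\frac{2c_1d_0}{3r_0}$ uniformly, iterate from $\tDefour_0=1$, and pass to $\lambda(v)$ via $b_{j+1}\geq\tb_{j+1}$. (That comparison and the one through Lemma~\ref{lem:inductive} both use the standing hypothesis $\beta>1/c_1^2$; bounding $b_{j+1}=1+\beta\kappa_{j+1}d_{j+1}\theta_{j+1}\geq 1+\frac{2c_1d_0}{3r_0}$ directly would give $\lambda(v)>0$ for every $\beta>1$.)
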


\ms

\noindent
{\it Proof.} We first show $\lambda(\tv) > 0$. Since $\ta_{i+1}, \tDetwo_i \geq 0$, the
recurrence \eqref{eq:recB} gives
$$\tDefour_{i+1} = \ta_{i+1}\,\tDetwo_i + \tb_{i+1}\,\tDefour_i \geq \tb_{i+1}\,\tDefour_i.$$
Using $\kappa_i \geq 1/r_0$, $d_i \geq d_0/3$ (from \eqref{eq:di}), $L_i \geq 1$, and
$\theta_i \geq 2c_1$:
$$\tb_i = 1 + \kappa_i d_i L_i\theta_i \geq 1 + \frac{2c_1 d_0}{3r_0}.$$
Setting $c_0 = \log\!\left(1 + \frac{2c_1 d_0}{3r_0}\right) > 0$, it follows by induction that
$$\tDefour_m \geq e^{c_0 m}\,\tDefour_0 = e^{c_0 m} \quad \text{for all } m \geq 1,$$
since $\tDefour_0 = 1$. Therefore $\lambda(\tv) = \lim_{m\to\infty}\frac{1}{m}\log\tDefour_m
\geq c_0 > 0$.

The same argument applies to $\lambda(v)$, replacing $\tb_i$ by $b_i \geq \tb_i$ throughout
(since $\hat\kappa_i = \beta\kappa_i > \kappa_i L_i$ as $\beta > L_0 \geq L_i$). Hence
$\Defour_m \geq e^{c_0 m}$ for all $m \geq 1$, and $\lambda(v) \geq c_0 > 0$.
\endofproof

\bs

{\bf {To finish the proof of Theorem \ref{thm1.1}}}, let $v \in E_0$ and $\tv \in F_0$ with
$\|v\| = \|\tv\| = 1$. Then \eqref{eq:lyap} and \eqref{eq:uv} (with $v_m = e^s u_m \geq e^s$) imply
\begin{eqnarray*}
\lambda(v)
& = & \lim_{m\to\infty} \frac{1}{m} \log \Defour_m
  \geq \lim_{m\to\infty} \frac{1}{m} \log (e^{ms}\,\tDefour_m)\\
& = & \lim_{m\to\infty} \frac{1}{m} \left(ms + \log\tDefour_m\right)
  = s + \lambda(\tv).
\end{eqnarray*}

Since $s \geq \log(\beta c_1^2) > 0$, we have $\lambda(v) > \lambda(\tv)$, so we can now
identify $\lambda_1(\beta) = \lambda(v)$ and $\lambda_2 (\beta) = \lambda(\tv)$. Since $\lambda(\tv)$ depends only on the dynamics restricted to $\Gamma$, it is
independent of $\beta$, so $\lambda_2(\beta) = \lambda_2$. By Lemma~\ref{lem:positive},
$\lambda_2 = \lambda(\tv) > 0$, and the gap satisfies
$$\lambda_1(\beta) - \lambda_2 \geq s \geq \log(\beta c_1^2) > 0,$$
establishing $\lambda_1 (\beta) \geq \lambda_2 + \log(\beta c_1^2) > \lambda_2 > 0$.  This completes the proof of Theorem~1.1.
\endofproof

\bs

\section*{Disclosure statement}
The authors report there are no competing interests to declare.

\section*{Data Availability Statement}
Data sharing not applicable to this article as no datasets were generated or analysed
during the current study.

\end{document}